\documentclass[12pt]{article}
\usepackage{amsmath,amssymb,amsthm,comment}

\newtheorem{theorem}{Theorem}[section]
\newtheorem{thmy}{Theorem}

\newtheorem{corollary}[theorem]{Corollary}

\newcommand{\dd}{\displaystyle }

\def\barr{\begin{array}}
\def\earr{\end{array}}

\title{On a generalization of the Reidemeister spectrum of a finite group}
\author{Marius T\u arn\u auceanu}
\date{September 27, 2026}

\begin{document}

\maketitle

\begin{abstract}
Based on a recent generalization of the Reidemeister spectrum of a finite group \cite{1}, in this short note we extend the main result of our previous paper \cite{2}.
\end{abstract}

{\small
\noindent
{\bf MSC 2020\,:} Primary 20D45, 20E45; Secondary 20E22. 

\noindent
{\bf Key words\,:} ZM-group, (bi-)twisted conjugacy, Reidemeister number, (ge\-ne\-ra\-li\-zed) Reidemeister spectrum.} 

\section{Introduction}

Let $G$ be a group and ${\rm Aut}(G)$ be the group of automorphisms of $G$. The concept of conjugacy in $G$ is naturally generalized to the so-called
\textit{twisted conjugacy}: given $\varphi\in {\rm Aut}(G)$, two elements $x,y\in G$ are called \textit{$\varphi$-conjugate} if there exists $g\in G$ such that $x=gy\varphi(g)^{-1}$. We define the \textit{Reidemeister number} $R(\varphi)$ of $\varphi$ as the number of equivalence classes with respect to this relation. Also, we define the \textit{Reidemeister spectrum} of $G$ to be the set ${\rm Spec}_R(G)=\{R(\varphi);\varphi\in {\rm Aut}(G)\}$. 

Twisted conjugacy in $G$ can be also generalized to \textit{bi-twisted conjugacy}: given $\varphi,\psi\in {\rm Aut}(G)$, two elements $x,y\in G$ are said to be \textit{$(\varphi,\psi)$-conjugate} if $x=\psi(g)y\varphi(g)^{-1}$ for some $g\in G$ (see \cite{1}). Then the number of equivalence classes with respect to this relation is called the \textit{Reidemeister number} of $(\varphi,\psi)$ and is denoted by $R(\varphi,\psi)$. Also, we call the \textit{generalized Reidemeister spectrum} of $G$ the set ${\rm Spec}'_R(G)=\{R(\varphi,\psi);\varphi,\psi\in {\rm Aut}(G)\}$.

The Reidemeister spectrum of a ZM-group has been determined in \cite{2}. In what follows, we will extend this result by computing the generalized Reidemeister spectrum of such a group. Recall that a ZM-group has the following presentation
\begin{equation}
{\rm ZM}(m,n,r)=\langle a, b \mid a^m = b^n = 1,
\hspace{1mm}b^{-1} a b = a^r\rangle, \nonumber
\end{equation}where the triple $(m,n,r)$ satisfies the conditions
\begin{equation}
(m,n)=(m,r-1)=1 \mbox{ and } r^n \equiv 1 \hspace{1mm}({\rm mod}\hspace{1mm}m).\nonumber
\end{equation}Note that $|{\rm ZM}(m,n,r)|=mn$ and $Z({\rm ZM}(m,n,r))=\langle b^{o_m(r)}\rangle$, where
\begin{equation}
o_m(r)={\rm min}\{k\in\mathbb{N}^*; r^k\equiv 1 \hspace{1mm}({\rm mod} \hspace{1mm}m)\}\,\,\footnote{\,\,For simplicity of notation, throughout the paper we will use $d$ instead of $o_m(r)$.}\nonumber
\end{equation}is the multiplicative order of $r$ modulo $m$. We also recall that each automorphism of ${\rm ZM}(m,n,r)$ is of type
\begin{equation}
f_{(x_1,x_2,x)}: b^ua^v\mapsto b^{xu}a^{x_1v+x_2[u]_r},\, u,v\geq 0,\nonumber
\end{equation}where the triple $(x_1,x_2,x)$ satisfies the conditions
\begin{equation}
0\leq x_1,x_2<m,\, (x_1,m)=1,\, 0\leq x<n \mbox{ and } x\equiv 1 \hspace{1mm}({\rm mod}\hspace{1mm}d).\nonumber
\end{equation}We will denote by $A_{(m,n,r)}$ the set of all these triples. Moreover, for every $u,v\in\mathbb{N}$ and $(x_1,x_2,x)\in A_{(m,n,r)}$ we will denote
\begin{itemize}
\item[-] $[u]_r=\left\{\barr{lll}
    \!\!1+r+\cdots+r^{u-1},&u>0\\
    \!\!0,&u=0\earr\right.$,
\item[-] $i_{(x_1,x_2)}(u,v)=x_1v+x_2[u]_r$,
\item[-] $j_{(x_1,x_2,x)}(u,v)=(m,[xu]_r,i_{(x_1,x_2)}(u,v))$.
\end{itemize}
\smallskip

Our main result is stated as follows.\vspace{1mm}

\begin{theorem}
We have ${\rm Spec}'_R({\rm ZM}(m,n,r))=$
\begin{equation}
\left\{\!\dd\frac{1}{m}\dd\sum_{v=0}^{m-1}\!\!\!\!\!\!\!\!\!\sum_{^{\,\,\,\,\,\,\,\,\,\,\,\,\,\,\,\,\,\,\,\,\,\,\,\,\,0\leq u\leq n-1}_{^{\,\,\,\,\,\,\,\,\,\,\,\,\,\,\,\,\,\,\,\,\,\,\frac{n}{(n,x-y)}\mid u}_{j_{(y_1,y_2,y)}(u,v)\mid i_{(x_1,x_2)}(u,v)}}}\!\!\!\!\!\!\!\!\!\!\!\frac{(m,[yu]_r)}{o_{\frac{(m,[yu]_r)}{j_{(y_1,y_2,y)}(u,v)}}(r)}; (x_1,x_2,x),(y_1,y_2,y)\in A_{(m,n,r)}\!\right\}\!.\nonumber\vspace{2mm}
\end{equation}
\end{theorem}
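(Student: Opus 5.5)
Our plan is to count $(\varphi,\psi)$-classes with Burnside's lemma. Put $\varphi=f_{(x_1,x_2,x)}$, $\psi=f_{(y_1,y_2,y)}$ and let $G={\rm ZM}(m,n,r)$ act on itself by $g\cdot z=\psi(g)z\varphi(g)^{-1}$. Then
\begin{equation}
R(\varphi,\psi)=\frac{1}{|G|}\sum_{g\in G}|{\rm Fix}(g)|,\qquad {\rm Fix}(g)=\{z\in G;\ \psi(g)z=z\varphi(g)\},\nonumber
\end{equation}
so $|{\rm Fix}(g)|$ is the number of $z$ conjugating $\varphi(g)$ to $\psi(g)$, i.e. $|{\rm Fix}(g)|=|C_G(\psi(g))|$ when $\varphi(g)$ and $\psi(g)$ are conjugate, and $0$ otherwise. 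Since $|G|=mn$, once we show that every nonzero term is independent of one of the two indices, the prefactor reduces to $\frac{1}{m}$ together with a sum over $v$.

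First we need the conjugacy classes and centralizers in normal form $b^ua^v$. From $a^{-1}b^ua^v a=b^ua^{v+r^u-1}$ and $b^{-1}(b^ua^v)b=b^ua^{rv}$ we see that the class of $b^ua^v$ lies in $\{b^{u}a^{w}\}$. Within this coset the classes are governed by the subgroup $\langle r^u-1\rangle$ of $\mathbb{Z}_m$, which equals $\langle (m,[u]_r)\rangle$ up to units; this uses $(m,r-1)=1$, because $r^u-1=(r-1)[u]_r$. The classes are further fused by multiplication by powers of $r$. Hence $b^ua^v$ and $b^ua^{w}$ are conjugate iff $w\equiv r^k v \pmod{(m,[u]_r)}$ for some $k$. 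The orbit of $v$ under $\langle r\rangle$ modulo $e=(m,[u]_r)$ has size $o_{e/(e,v)}(r)$. This gives
\begin{equation}
|C_G(b^ua^v)|=\frac{mn}{|{\rm cl}(b^ua^v)|}=\frac{mn}{\frac{m}{(m,[u]_r)}\,o_{(m,[u]_r)/((m,[u]_r),v)}(r)}.\nonumber
\end{equation}

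Next, for $g=b^ua^v$ we have $\varphi(g)=b^{xu}a^{i_{(x_1,x_2)}(u,v)}$ and $\psi(g)=b^{yu}a^{i_{(y_1,y_2)}(u,v)}$. Conjugate elements have the same $b$-exponent modulo $n$, so we need $xu\equiv yu\pmod n$, i.e. $\frac{n}{(n,x-y)}\mid u$. Given that, conjugacy amounts to the solvability of $i_{(x_1,x_2)}\equiv r^k i_{(y_1,y_2)}\pmod{(m,[yu]_r)}$.

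This is the step we expect to be the main obstacle: we must rewrite the condition as the divisibility $j_{(y_1,y_2,y)}(u,v)\mid i_{(x_1,x_2)}(u,v)$ and match the centralizer order with $\frac{(m,[yu]_r)}{o(\cdot)}$. Since $x\equiv y\equiv1 \pmod d$, we have $r^{xu}\equiv r^{yu}\equiv r^u$, so $(m,[xu]_r)=(m,[yu]_r)$. We would first try to prove that the orbit of $i_{(y_1,y_2)}$ modulo $e=(m,[yu]_r)$ consists of all residues with the same gcd with $e$. That holds only up to units, so we would need to use that $x_1,y_1$ range over units and reparametrize $v$. If that fails, we would instead reparametrize $g$, replacing $v$ by a unit multiple through the substitution $v\mapsto y_1^{-1}x_1 v$ and adjusting $x_2$, so that the condition becomes a gcd condition.

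Finally, we insert $|C_G|$ into Burnside's formula. The factor $n$ in $|C_G|$ cancels the $n$ in $|G|=mn$, and the $m$ combines with $\frac{(m,[yu]_r)}{m}$ to give the stated summand $\frac{(m,[yu]_r)}{o_{(m,[yu]_r)/j}(r)}$ with prefactor $\frac1m$. Letting both triples range over $A_{(m,n,r)}$ then yields the whole spectrum.
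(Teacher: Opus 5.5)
Your route is the paper's own, Burnside's lemma for $g\cdot z=\psi(g)z\varphi(g)^{-1}$, repackaged through class sizes and centralizers. That part is correct: it gives $|C_G(b^{yu}a^{w})|=ne/o_{e/(e,w)}(r)$ with $e=(m,[yu]_r)$, and $(e,i_{(y_1,y_2)}(u,v))=j_{(y_1,y_2,y)}(u,v)$.

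The gap is exactly the step you flagged and left open, and it cannot be filled. Solvability of $i_{(x_1,x_2)}\equiv r^k i_{(y_1,y_2)}\pmod e$ is \emph{not} equivalent to $j_{(y_1,y_2,y)}(u,v)\mid i_{(x_1,x_2)}(u,v)$; the divisibility is only necessary. The paper's proof asserts this equivalence for its congruence (2) without justification, so it has the same hole.

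To see the real condition, note that $y\equiv1\pmod d$ gives $r^{yu}\equiv r^u$. Since $(m,r-1)=1$, this yields $[yu]_r\equiv[u]_r\pmod m$. So $e=(m,[u]_r)$ and $[u]_r\equiv0\pmod e$, hence modulo $e$ we have $i_{(x_1,x_2)}\equiv x_1v$ and $i_{(y_1,y_2)}\equiv y_1v$. Thus $j=(e,v)$ divides $i_{(x_1,x_2)}$ for all $u,v$, so the divisibility in the statement is vacuous. The true condition is $x_1\equiv r^ky_1\pmod{e/(e,v)}$ for some $k$, i.e.\ $x_1y_1^{-1}\in\langle r\rangle$ modulo $e/(e,v)$. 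This condition is unchanged when $v$ is multiplied by a unit, and $x_2,y_2$ have already dropped out. So no substitution $v\mapsto y_1^{-1}x_1v$, and no adjustment of $x_2$, can turn it into a gcd condition.

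A concrete counterexample: take ${\rm ZM}(7,3,2)$, so $d=3$ and $x=y=1$. Let $\varphi=f_{(6,0,1)}$, i.e.\ $a\mapsto a^{-1}$, $b\mapsto b$, and $\psi=f_{(1,0,1)}={\rm id}$.
For $u=0$, $v\neq0$ the fixed-point condition reads $2^\alpha\equiv6\pmod7$, which is impossible. Yet the formula includes the summand $7/3$ for each such $v$.
The correct count is $R(\varphi,\psi)=\frac17(7+7+7)=3$; equivalently, $\varphi$ swaps the classes $\{a,a^2,a^4\}$ and $\{a^3,a^5,a^6\}$ and fixes the other three.
The displayed expression equals $\frac17(7+6\cdot\frac73+7+7)=5$ for every pair of triples.

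So the displayed set is $\{5\}$, while the actual spectrum is $\{3,5\}$. The statement is false, and so is Corollary 1.2. No proof along any route can succeed. What your method does establish is
\[
R(\varphi,\psi)=\frac1m\sum_{v=0}^{m-1}\sum_{u}\frac{e}{o_{e/(e,v)}(r)},\qquad e=(m,[u]_r),
\]
with $u$ ranging over $0\le u<n$ such that $\frac{n}{(n,x-y)}\mid u$ and $x_1y_1^{-1}\in\langle r\rangle$ modulo $e/(e,v)$.

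As a sanity check, $z\mapsto\psi^{-1}(z)$ carries $(\varphi,\psi)$-classes onto $\psi^{-1}\varphi$-twisted classes. Hence $R(\varphi,\psi)=R(\psi^{-1}\varphi)$, and ${\rm Spec}'_R(G)={\rm Spec}_R(G)$ for every group $G$.
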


Note that in the particular case when $n$ is prime we reobtain Corollary 1.3 of \cite{2}.

\begin{corollary}
If $n$ is prime, then we have
\begin{equation}
{\rm Spec}'_R({\rm ZM}(m,n,r))={\rm Spec}_R({\rm ZM}(m,n,r))=\left\{n-1+\dd\frac{S}{n}\right\}\!,\nonumber
\end{equation}where
\begin{equation}
S=\dd\sum_{u=0}^{n-1}\,(m,[u]_r).\nonumber
\end{equation}
\end{corollary}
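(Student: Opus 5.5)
The plan is to specialize the Theorem to $n$ prime. The sum then collapses to something explicit, because the gcd's $(m,[u]_r)$ become trivial for $0<u<n$.

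\emph{Step 1: the automorphism parameters are forced.} Since $r^n\equiv 1 \pmod m$, the order $d$ divides the prime $n$, so $d\in\{1,n\}$. If $d=1$, then $r\equiv 1\pmod m$, and $(m,r-1)=1$ forces $m=1$. The group is then cyclic, a degenerate case I would set aside or check directly. So assume $m>1$ and $d=n$. The condition $0\le x<n$, $x\equiv 1\pmod n$ gives $x=y=1$. Hence every pair $(\varphi,\psi)$ in the Theorem uses $x=y=1$ and only $x_1,x_2,y_1,y_2$ vary. I will show the resulting value does not depend on these, so ${\rm Spec}'_R$ is a singleton. Because ${\rm Spec}_R\subseteq{\rm Spec}'_R$ (take $\psi={\rm id}$) and ${\rm Spec}_R$ is nonempty, the two spectra then coincide.

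\emph{Step 2: the key arithmetic fact.} Let $p$ be a prime dividing $m$. Then $p\nmid r-1$, so the order of $r$ modulo $p$ divides $n$ but is not $1$, hence equals $n$. Consequently:
\begin{itemize}
\item for $0<u<n$ we have $p\nmid r^u-1=(r-1)[u]_r$, so $(m,[u]_r)=1$;
\item for every divisor $k>1$ of $m$ we have $o_k(r)=n$, while $o_1(r)=1$.
\end{itemize}

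\emph{Step 3: evaluate the sum.} With $x=y$, the condition $\frac{n}{(n,0)}=1\mid u$ is automatic, so all $u$ occur.

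For $0<u<n$: $(m,[u]_r)=1$, so $j=1$ divides $i$, and each term is $1/o_1(r)=1$. Summing over $v$ and $u$ gives $(n-1)m$, which contributes $n-1$ after dividing by $m$.

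For $u=0$: $[0]_r=0$, so $(m,[0]_r)=m$ and $j=(m,y_1v)=(m,v)$, using $(y_1,m)=1$. Also $i=x_1v$ is divisible by $(m,v)$, so the divisibility condition always holds. The term is $m/o_{m/(m,v)}(r)$.
\begin{itemize}
\item When $v=0$, we have $m/(m,v)=1$, so the term is $m$.
\item When $0<v<m$, we have $m/(m,v)>1$, so the term is $m/n$.
\end{itemize}
This part therefore contributes $\frac{1}{m}\big(m+(m-1)\frac{m}{n}\big)=1+\frac{m-1}{n}$.

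The total is $n+\frac{m-1}{n}$, and it is independent of $x_1,x_2,y_1,y_2$.

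\emph{Step 4: compare with the stated formula.} By Step 2, $S=m+(n-1)$, so
\[
n-1+\frac{S}{n}=n-1+\frac{m+n-1}{n}=n+\frac{m-1}{n},
\]
which matches Step 3.

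The main point needing care is Step 2, namely that every prime divisor of $m$ sees $r$ with order exactly $n$; the rest is bookkeeping.
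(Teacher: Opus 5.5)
Your arithmetic from Step 2 on correctly evaluates the Theorem 1.1 sum at $x=y=1$. The route is the paper's own: the paper specializes Theorem 1.1 the same way and then quotes Corollary 1.3 of the earlier paper instead of computing. The problem is that the argument is only as good as Theorem 1.1, and Theorem 1.1 fails exactly where your Step 3 uses it.

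In its proof, congruence (2), $r^{\alpha}i_{(y_1,y_2)}(u,v)\equiv i_{(x_1,x_2)}(u,v)$ modulo $(m,[yu]_r)$, is claimed to be solvable in $\alpha$ if and only if $j_{(y_1,y_2,y)}(u,v)\mid i_{(x_1,x_2)}(u,v)$. Only necessity holds. As $\alpha$ varies, $r^{\alpha}i_{(y_1,y_2)}(u,v)$ runs through the single coset $i_{(y_1,y_2)}(u,v)\langle r\rangle$, not through all residues with the same gcd with the modulus. In your Step 3 this is the sentence ``$i=x_1v$ is divisible by $(m,v)$, so the divisibility condition always holds''. For $u=0$ the real condition is that $r^{\alpha}y_1v\equiv x_1v \pmod m$ be solvable, i.e.\ that $x_1y_1^{-1}$ be a power of $r$ modulo $m/(m,v)$. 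When it is not, the $(0,v)$ term contributes $0$, not $m/n$.

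As a result the statement itself is false, so no repair of the argument can work. Take ${\rm ZM}(5,2,4)\cong D_{10}$: here $n=2$, $S=5+1=6$, and $n-1+S/n=4=R({\rm id})$. For $\varphi=f_{(2,0,1)}$ ($a\mapsto a^2$, $b\mapsto b$) one has $a^v\cdot 1\cdot\varphi(a^v)^{-1}=a^{-v}$ and $a^v b\,\varphi(a^v)^{-1}=ba^{2v}$. So the $\varphi$-classes are $\langle a\rangle$ and $b\langle a\rangle$, and $R(\varphi)=2$; equivalently, $\varphi$ swaps the conjugacy classes $\{a,a^4\}$ and $\{a^2,a^3\}$. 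Likewise ${\rm ZM}(7,3,2)$ with $a\mapsto a^3$, $b\mapsto b$ gives $R=3\neq 5$.

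Doing the Burnside count correctly for $n$ prime and $m>1$ gives $R(\varphi,\psi)=n+N/n$, where $N$ is the number of $v\in\{1,\dots,m-1\}$ such that $x_1y_1^{-1}$ is a power of $r$ modulo $m/(m,v)$. This equals $n+\frac{m-1}{n}$ only when $x_1y_1^{-1}\in\langle r\rangle$ modulo $m$.

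Two smaller points. First, the equality ${\rm Spec}'_R={\rm Spec}_R$ does hold, but for a general reason: applying $\psi^{-1}$ shows $R(\varphi,\psi)=R(\psi^{-1}\varphi)$ in any group, so it does not depend on the spectrum being a singleton. Second, your excluded case $m=1$ cannot be ``checked directly'', since for $n$ an odd prime ${\rm Spec}_R(\mathbb{Z}_n)=\{1,n\}$; it has to be excluded.
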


\section{Proofs of the main results}

\begin{proof}[Proof of Theorem 1.1.] 
Let $\varphi=\varphi_{(x_1,x_2,x)},\psi=\psi_{(y_1,y_2,y)}\in{\rm Aut}({\rm ZM}(m,n,r))$. In order to compute $R(\varphi,\psi)$, we will apply Burnside's lemma to the action of ${\rm ZM}(m,n,r)$ on ${\rm ZM}(m,n,r)$ given by
\begin{equation}
g\circ x=\psi(g)x\varphi(g)^{-1}, \forall\, g,x\in{\rm ZM}(m,n,r).\nonumber
\end{equation}Let $g=b^ua^v$ and $x=b^{\alpha}a^{\beta}$, where $0\leq u,\alpha<n$ and $0\leq v,\beta<m$. We have
\begin{align*}
&Fix(g)=\{x\in{\rm ZM}(m,n,r); \varphi(g)=x^{-1}\psi(g)x\}\\
&=\{b^{\alpha}a^{\beta}\in{\rm ZM}(m,n,r); b^{xu}a^{x_1v+x_2[u]_r}=b^{yu}a^{r^{\alpha}(y_1v+y_2[u]_r)-\beta(r^{yu}-1)}\}\\
&=\{b^{\alpha}a^{\beta}\in{\rm ZM}(m,n,r); b^{xu}=b^{yu} \mbox{ and } a^{x_1v+x_2[u]_r}=a^{r^{\alpha}(y_1v+y_2[u]_r)-\beta(r^{yu}-1)}\}.\nonumber
\end{align*}We remark that
\begin{equation}
b^{xu}=b^{yu}\Leftrightarrow n\mid (x-y)u\Leftrightarrow\frac{n}{(n,x-y)}\mid u\nonumber
\end{equation}and
\begin{equation}
a^{x_1v+x_2[u]_r}\!\!=a^{r^{\alpha}(y_1v+y_2[u]_r)-\beta(r^{yu}-1)}\Leftrightarrow m\mid (r^{\alpha}y_1-x_1)v-\beta(r^{yu}-1)+(r^{\alpha}y_2-x_2)[u]_r.\nonumber
\end{equation}For fixed $u,v$ and $\alpha$, the last congruence has solutions $\beta$ if and only if 
\begin{equation}
(m,[yu]_r)\mid (r^{\alpha}y_1-x_1)v+(r^{\alpha}y_2-x_2)[u]_r,
\end{equation}since $r^{yu}-1=[yu]_r(r-1)$ and $(m,r-1)=1$. Moreover, if it has solutions, then there are exactly $(m,[yu]_r)$ solutions. Note that (1) is equivalent with
\begin{equation}
(m,[yu]_r)\mid r^{\alpha}i_{(y_1,y_2)}(u,v)-i_{(x_1,x_2)}(u,v).
\end{equation}For fixed $u,v$, (2) has solutions $\alpha$ if and only if 
\begin{equation}
(m,[yu]_r,i_{(y_1,y_2)}(u,v))\mid i_{(x_1,x_2)}(u,v),\nonumber
\end{equation}which means
\begin{equation}
j_{(y_1,y_2,y)}(u,v)\mid i_{(x_1,x_2)}(u,v).\nonumber
\end{equation}Also, it is easy to see that (2) has the same number of solutions $\alpha$ as
\begin{equation}
\frac{(m,[yu]_r)}{j_{(y_1,y_2,y)}(u,v)}\mid r^{\alpha}-1,\nonumber
\end{equation}that is as\vspace{-2mm}
\begin{equation}
o_{\frac{(m,[yu]_r)}{j_{(y_1,y_2,y)}(u,v)}}(r)\mid\alpha.
\end{equation}Obviously, (3) has $\frac{n}{o_{\frac{(m,[yu]_r)}{j_{(y_1,y_2,y)}(u,v)}}(r)}$ solutions and so we obtain
\begin{align*}
&\hspace{40mm}R(\varphi,\psi)=\dd\frac{1}{mn}\dd\sum_{g\in{\rm ZM}(m,n,r)}\text{card}(Fix(g))\\
&=\dd\frac{1}{mn}\dd\sum_{(u,v)}\text{card}(\{(\alpha,\beta); b^{xu}=b^{yu} \mbox{ and } a^{x_1v+x_2[u]_r}=a^{r^{\alpha}(y_1v+y_2[u]_r)-\beta(r^{yu}-1)}\})\\
&=\dd\frac{1}{mn}\sum_{v=0}^{m-1}\sum_{^{\,0\leq u\leq n-1}_{\frac{n}{(n,x-y)}\mid u}}\sum_{\alpha=0}^{n-1}\text{card}(\{0\leq\beta<m; m\!\mid\! (r^{\alpha}y_1{-}x_1)v{-}\beta(r^{yu}{-}1){+}(r^{\alpha}y_2{-}x_2)[u]_r\})\\
&=\dd\frac{1}{mn}\sum_{v=0}^{m-1}\sum_{^{\,0\leq u\leq n-1}_{\frac{n}{(n,x-y)}\mid u}}\!\sum_{^{\,\,\,\,\,\,\,\,\,\,\,\,\,\,\,\,\,\,\,\,\,\,\,\,\,\,\,\,\,\,\,\,\,\,\,\,\,\,0\leq \alpha\leq n-1}_{\,\,\,(m,[yu]_r)\mid (r^{\alpha}y_1-x_1)v+(r^{\alpha}y_2-x_2)[u]_r}}\!\!\!(m,[yu]_r)\\
&=\dd\frac{1}{mn}\sum_{v=0}^{m-1}\sum_{^{\,0\leq u\leq n-1}_{\frac{n}{(n,x-y)}\mid u}}\!\!\!(m,[yu]_r)\text{card}(\{0\leq\alpha<n; (m,[yu]_r)\mid (r^{\alpha}y_1{-}x_1)v{+}(r^{\alpha}y_2{-}x_2)[u]_r\})\\
&=\dd\frac{1}{mn}\dd\sum_{v=0}^{m-1}\!\!\!\!\!\!\!\!\!\sum_{^{\,\,\,\,\,\,\,\,\,\,\,\,\,\,\,\,\,\,\,\,\,\,\,\,\,0\leq u\leq n-1}_{^{\,\,\,\,\,\,\,\,\,\,\,\,\,\,\,\,\,\,\,\,\,\,\frac{n}{(n,x-y)}\mid u}_{j_{(y_1,y_2,y)}(u,v)\mid i_{(x_1,x_2)}(u,v)}}}\!\!\!\!\!\!\!\!\!\!\!(m,[yu]_r)\,\frac{n}{o_{\frac{(m,[yu]_r)}{j_{(y_1,y_2,y)}(u,v)}}(r)}\\
&=\dd\frac{1}{m}\dd\sum_{v=0}^{m-1}\!\!\!\!\!\!\!\!\!\sum_{^{\,\,\,\,\,\,\,\,\,\,\,\,\,\,\,\,\,\,\,\,\,\,\,\,\,0\leq u\leq n-1}_{^{\,\,\,\,\,\,\,\,\,\,\,\,\,\,\,\,\,\,\,\,\,\,\frac{n}{(n,x-y)}\mid u}_{j_{(y_1,y_2,y)}(u,v)\mid i_{(x_1,x_2)}(u,v)}}}\!\!\!\!\!\!\!\!\!\!\!\frac{(m,[yu]_r)}{o_{\frac{(m,[yu]_r)}{j_{(y_1,y_2,y)}(u,v)}}(r)}\,,\nonumber
\end{align*}as desired.
\end{proof}

We observe that if in the above proof we take $\psi=1_{{\rm ZM}(m,n,r)}$, then $y_1=y=1$ and $y_2=0$, which lead to
\begin{equation}
j_{(y_1,y_2,y)}(u,v)=(m,[u]_r,v)\mid i_{(x_1,x_2)}(u,v),\nonumber
\end{equation}for all $0\leq u<n$ and $0\leq v<m$. Thus we get the formula in Theorem 1.1 of \cite{2}.

\begin{proof}[Proof of Corollary 1.2.] 
If $n$ is prime, then $d=n$ and so the conditions $0\leq x,y<n$ and $x,y\equiv 1\, ({\rm mod}\, d)$ imply $x=y=1$ for any $\varphi=\varphi_{(x_1,x_2,x)},\psi=\psi_{(y_1,y_2,y)}\in{\rm Aut}({\rm ZM}(m,n,r))$. Thus, we have 
\begin{equation}
\frac{n}{(n,x-y)}=1\mid u, \mbox{ for all } 0\leq u<n.\nonumber 
\end{equation}We also obtain that
\begin{equation}
j_{(y_1,y_2,1)}(u,v)=(m,[u]_r,y_1v+y_2[u]_r)=(m,[u]_r,y_1v)=(m,[u]_r,v)\nonumber
\end{equation}divides $([u]_r,v)$ and consequently it divides
\begin{equation}
i_{(x_1,x_2)}(u,v)=x_1v+x_2[u]_r,\nonumber
\end{equation}for all $0\leq v<m$ and $0\leq u<n$. Then
\begin{equation}
R(\varphi,\psi)=\dd\frac{1}{m}\dd\sum_{v=0}^{m-1}\sum_{u=0}^{n-1}\frac{(m,[u]_r)}{o_{\frac{(m,[u]_r)}{(m,[u]_r,v)}}(r)}=n-1+\dd\frac{S}{n}\nonumber
\end{equation}by Corollary 1.3 of \cite{2}. This completes the proof.
\end{proof}

\bigskip\noindent{\bf Acknowledgements.} The author is grateful to the reviewer for remarks which improved the previous version of the paper.

\newpage

\vspace*{3ex}\small

\hfill
\begin{minipage}[t]{5cm}
Marius T\u arn\u auceanu \\
Faculty of  Mathematics \\
``Al.I. Cuza'' University \\
Ia\c si, Romania \\
e-mail: {\tt tarnauc@uaic.ro}
\end{minipage}

\end{document}